\documentclass[11pt]{amsart}
\usepackage[top=1.5in, bottom=1.5in, left=1in, right=1in]{geometry}   
\usepackage{amsmath, amsfonts, amssymb,amsthm}
\usepackage[pagebackref]{hyperref}

\usepackage{xcolor}
\usepackage{hyperref}
\hypersetup{
	colorlinks,
	linkcolor={red!50!black},
	citecolor={blue!62!black},
	urlcolor={blue!80!black}
}
\usepackage{enumitem}
\usepackage{cases}
\newtheorem*{corollary}{Corollary}
\theoremstyle{plain}
\newcommand{\thistheoremname}{}
\newtheorem*{genericthm*}{\thistheoremname}
\newenvironment{namedthm*}[1]{\renewcommand{\thistheoremname}{#1}
	\begin{genericthm*}}
	{\end{genericthm*}}
\theoremstyle{remark}

\DeclareMathOperator{\ord}{ord}
\title[Hyperbolicity of Fermat-type curves and thier complements]{Hyperbolicity of Fermat-type curves\\and their complements}
\subjclass[2010]{32H25, 32H30}
\keywords{Nevanlinna theory, complex hyperbolicity, Borel Theorem}

\author{Tuan Anh Nguyen}
\address{Department of Mathematics, University of Education, Hue University, 34 Le Loi St., Hue City, Vietnam}
\email{natuan@dhsphue.edu.vn}
\date{}
\begin{document}
    \begin{abstract}
        In this paper, by using the generalized Borel theorems in $\mathbb{CP}^2$, we show the hyperbolicity of Fermat type curves and their complement in $\mathbb{CP}^2$. This improves Noguchi-Shirosaki's and Demailly-El Goul's degree bounds.
    \end{abstract}
    \maketitle
%--------------------------------------
\section{Introduction}
    A complex manifold $X$ is said to be \textit{hyperbolic} (in the sense of \textit{Brody}) if it contains no entire curves, i.e., no image of a non-constant holomorphic map $f:\mathbb{C}\rightarrow X$. In 1970, Kobayashi conjectured that a generic hypersurface $D$ of degree $d$ large enough in the projective space $\mathbb{CP}^n$ is hyperbolic \cite{Kobayashi1970}.  According to Zaidenberg \cite{Zaidenberg1987}, the optimal bound for the degree is expected to be
    $d=2n-1$. In the so-called \textit{logarithmic} case, the complement of the generic hypersurface $D$ of degree $d\geq 2n+1$ in $\mathbb{CP}^n$ is also anticipated to be hyperbolic. The concept of hyperbolicity has attracted much attention since it is believed to intimately related to many areas of mathematics, especially in Diophantine approximation. For instance, the Lang-Vojta conjecture \cite{lang1991number,vojta1987diophantine} states that an algebraic variety $X$ over a number field $K$ contains only finitely many $K$-rational points if $X_\mathbb{C}$ is hyperbolic after some base change $K\hookrightarrow\mathbb{C}$. 

    In the recent years, significant progress has been made toward these conjectures. Notably, Kobayashi's conjecture was confirmed in cases where the degree of the hypersurface is very high compared to the dimension of the projective space. In the case of $\mathbb{CP}^3$, the first proof was given by McQuillan \cite{Mcquillan1999} with the degree $d\geq 36$. Later, the result was improved by Demailly-El Goul \cite{Demailly_Goul2000} and P\v{a}un \cite{mihaipaun2008}. In the case of $\mathbb{CP}^4$, the hyperbolicity was confirmed for generic hypersurfaces of degree $d\geq 593$ \cite{Rousseau2007,Diverio-Trapani2010}. In the case of arbitrary dimension $n$, the conjecture was verified with the sufficiently large degree
    $d(n)\gg 1$ \cite{Siu2004,berczi2024non,brotbek2017hyperbolicity}. 
    
    Regarding the logarithmic case, the hyperbolicity of the complement of the generic smooth curve of very high degree in $\mathbb{CP}^2$ was verified by Siu-Yeung \cite{siu1996hyperbolicity}. Later,  
    a lower degree bound $d\geq 14$ was given by Rousseau \cite{rousseau2009logarithmic}. In arbitrary dimension $n$, the conjecture was confirmed by Brotbek-Deng when the degree is sufficiently large \cite{brotbek2019kobayashi}.
    
    On the other hand, over the past five decades, constructing examples of hyperbolic hypersurfaces of low degree in $\mathbb{CP}^n$ has been considered a very challenging problem. The first example of hyperbolic surface in $\mathbb{CP}^3$ was given by Brody-Green \cite{Brody-Green1977}.
    In the logarithmic case, an example of a non-singular quintic curve in $\mathbb{CP}^2$ whose complement is hyperbolic was given by Zaidenberg \cite{zauidenberg1989stability}. Currently, there are two main methods to construct hyperbolic hypersurfaces in projective space. The first one is the deformation method introduced by Zaidenberg \cite{Zaidenberg1988}. Adapting this approach, Duval constructed a hyperbolic sextic surface in $\mathbb{CP}^3$ \cite{duval2003sextique}, which remains the best result in $\mathbb{CP}^3$ up to now. After that, Huynh followed up and obtained hyperbolic hypersurfaces of degree $d=2n$ in $\mathbb{CP}^n$ for $3<n\leq 6$ \cite{huynh2015examples}. For arbitrary dimension $n$, degree bounds $d\geq 16(n-1)^2$ and $d\geq 4(n-1)^2$ were given by Siu-Yeng \cite{siu_yeung1997} and by Shiffman-Zaidenberg \cite{shiffman_zaidenberg2002_pn} respectively; and the best result up to now is $d\geq\left[\frac{n+2}{2}\right]^2$ \cite{Huynh2016}.
    
    The second method is finding hyperbolic hypersurfaces among the class of perturbations of Fermat-type hypersurfaces. This method makes use of some variants of the generalized Borel theorem to study the degeneracy of holomorphic curves that either avoid or are contained in the Fermat-type hypersurfaces. Up to now, several variants have been established in \cite{siu_yeung1997,demailly1997connexions,huynh2024some}.
    In this paper, we aim to apply the generalized Borel theorems in \cite{huynh2024some} in the case of $\mathbb{CP}^2$ to improve the degree bound $d\geq 11$, previously obtained by Demailly-El Goul in \cite{demailly2012hyperbolic,demailly1997connexions}:
    \begin{namedthm*}
        {Theorem A}
        Consider the curve 
        \begin{equation*}
            \mathcal{C}=\{z_0^d+z_1^d+z_2^{d-2}(\varepsilon_0z_0^2+\varepsilon_1z_1^2+z_2^2)=0\}\subset\mathbb{CP}^2,
        \end{equation*}
        where $\varepsilon_0,\varepsilon_1$ satisfies:
        \renewcommand{\labelitemi}{-}
        \begin{itemize}
            \item $\varepsilon_0\lambda^2+\varepsilon_1\neq 0$ for all $\lambda$ satisfying $\lambda^d+1=0$; and
            \item $\varepsilon_0,\varepsilon_1$ are not solutions of $z(1-
            \frac{2}{d})\sqrt[d-2]{\frac{-2z}{d}}^2+1=0$.
        \end{itemize}
        Then, the complement $\mathbb{CP}^2\setminus\mathcal{C}$ is hyperbolic for all $d\geq 9$.
    \end{namedthm*}
    
    Regarding the Lang-Vojta conjecture, in the one-dimensional case, it reduces to the Mordell conjecture, which was proved in 1983 by Faltings \cite{faltings1983}. In higher dimensions, it remains an open problem. So far, only a few examples have been verified to satisfy the Lang-Vojta conjecture. The first such example was the family of hyperbolic hypersurfaces constructed by Shirosaki and Noguchi \cite{shirosaki1998some,noguchi2003arithmetic}. In particular, in \cite{shirosaki1998some}, Shirosaki proved that the family of Fermat-type curves 
    \begin{equation*}
        \mathcal{C}_{a,b}=\{(z_0:z_1:z_2)\in\mathbb{CP}^2:P(z_0,z_1)=aP(bz_1,z_2)\}\subset\mathbb{CP}^2
    \end{equation*}
    where $P(z_0,z_1)=z_0^d+z_1^d+z_0^ez_1^{d-e}$ is hyperbolic in case $a,b\neq 0,d>2e+8$ and $e>2$. Using this result, he then proved the hyperbolicity of hypersurfaces
    \begin{equation}
        \label{define hypersurface X_n}
        X_n=\{P_n(z_0,\dots,z_n)=0\}\subset\mathbb{CP}^n,
    \end{equation}
    where $P_n(z_0,\dots,z_n)$ are homogeneous polynomials defined inductively as: 
    \begin{align*}
        P_1(z_0,z_1)&=P(z_0,z_1),\\
        P_n(z_0,\dots,z_n)&=P_{n-1}(P(z_0,z_1),\dots,P(z_{n-1},z_n)).
    \end{align*}
    Following up, Noguchi proved the hyperbolicity of the curve $\mathcal{C}_{a,b}$ in case $b=0$ with the condition $e>3$ and, especially, verified that all hyperbolic hypersurfaces $X_n$ satisfy the Lang-Vojta conjecture \cite{noguchi2003arithmetic}. In our second result, by using the generalized Borel theorem in the compact case, we are able to relax the conditions of Shirosaki-Noguchi's theorem.
    \begin{namedthm*}{Theorem B}
        Let $a,b\in\mathbb{C}$ and $a\neq 0$. Then, the curve $\mathcal{C}_{a,b}\subset\mathbb{CP}^2$ defined by
        \begin{equation*}
            \mathcal{C}_{a,b}=\{(z_0:z_1:z_2)\in\mathbb{CP}^2\,|\,P(z_0,z_1)=aP(bz_1,z_2)\},
        \end{equation*}
        where $P(z_0,z_1)=z_0^d+z_1^d+z_0^ez_1^{d-e}$ is hyperbolic if
        \begin{enumerate}[label=(\roman*)]
            \item $b=0,d>e+3,e\geq 0$; or
            \item $b\neq 0,ab^d\not\in\{1,2\},d>2e+3,e\geq 0$; or
            \item $ab^d=2,d>2e+3,e>0$; or,
            \item $ab^d=1,d>2e+3,e\geq 0$ and $(d,e)=1$ if $e>0$.
        \end{enumerate}
    \end{namedthm*} 
    As a by-product, we improve the bound on the degree $d>12$ of the family of hyperbolic hypersurfaces satisfying the Lang-Vojta conjecture in \cite{noguchi2003arithmetic}:
    \begin{corollary}
        Under the same conditions as in \textbf{Theorem B}, the hyperbolic hypersurface $X_n$ defined as in \eqref{define hypersurface X_n} of degree $d^n$ with $d>5$ satisfies the Lang-Vojta's conjecture.
    \end{corollary}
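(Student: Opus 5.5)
The corollary follows Noguchi's argument in \cite{noguchi2003arithmetic}, with Theorem B replacing Shirosaki's hyperbolicity criterion for $\mathcal{C}_{a,b}$. Noguchi shows that $X_n$ satisfies the Lang--Vojta conjecture whenever the two-variable building block $P$ has two properties. The first is an arithmetic finiteness property: over a number field $K$ (or a ring of $S$-integers), the equation $P(x_0,x_1)=aP(x_2,x_3)$ and its projective analogues have only finitely many nontrivial solutions. The second is a geometric property: the curves $\mathcal{C}_{a,b}$, and the configurations obtained by the inductive substitution $P_n=P_{n-1}(P(z_0,z_1),\dots,P(z_{n-1},z_n))$, are hyperbolic. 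The plan is to check that every place where Noguchi uses the restriction $d>12$ only needs the hyperbolicity of the curves $\mathcal{C}_{a,b}$ for the relevant parameters. Theorem B then supplies this for $d>5$.

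\textbf{Step 1: the inductive structure.} If $x\in X_n(K)$, then $y_i=P(x_i,x_{i+1})$ gives a point $y\in X_{n-1}$. Noguchi's induction reduces finiteness to two things: finiteness on $X_{n-1}$, and finiteness of the fibres and of the degenerate loci. The degenerate loci are where some $y_i=0$ or where $y$ lies on a linear subspace. Each degenerate stratum is cut out by equations of the shape $P(z_i,z_{i+1})=aP(bz_{i+1},z_{i+2})$, with $a\neq0$ coming from ratios of coordinates and $b\in\{0\}\cup\{\text{constants}\}$. Its rational points therefore lie on curves of type $\mathcal{C}_{a,b}$, or on curves built from them, or on lines $z_i=cz_{i+1}$ on which $P$ has finitely many zeros.

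\textbf{Step 2: apply Theorem B.} Each such $\mathcal{C}_{a,b}$ is hyperbolic by Theorem B, so its normalization has genus at least $2$. Faltings' theorem then gives finitely many $K$-rational points on each. The complex hyperbolicity of $X_n$ follows from the same induction applied to entire curves $f:\mathbb{C}\to X_n$: the projection $P\circ f$ lands in $X_{n-1}$, which is hyperbolic by induction, so it is constant. Hence $f$ lies in some $\mathcal{C}_{a,b}$, and by Theorem B $f$ is constant.

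\textbf{The main obstacle: the parameter cases.} I must check that every $(a,b)$ arising in the strata falls under one of cases (i)--(iv), given the fixed $e$ and $d>5$. The needed degree inequalities are $d>e+3$ or $d>2e+3$. For small $e$ (for instance $e\in\{0,1\}$) these hold once $d>5$. For case (iv) one additionally needs $(d,e)=1$, or must handle separately the special values $ab^d\in\{1,2\}$, which arise exactly when coordinates coincide up to $d$-th roots of unity. I expect the condition ``under the same conditions as in Theorem B'' is meant to impose precisely these restrictions on $(d,e)$. I would therefore carry the hypotheses of Theorem B through the induction and verify that no new parameter values appear at higher $n$. This holds because every stratum again has the shape $P(u,v)=aP(bv,w)$.
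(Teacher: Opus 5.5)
Your route is the one the paper has in mind: the paper states the corollary as a by-product, without further proof. You run the induction through $x\mapsto(P(x_0,x_1):\dots:P(x_{n-1},x_n))\in X_{n-1}$, with Theorem B replacing Shirosaki's theorem on $\mathcal{C}_{a,b}$ and Faltings applied to these curves. However, the step you flag as the main obstacle is left open, and your proposed resolution does not work. In the induction you have no control over the parameters. The value $a$ is a ratio $y_i/y_j$ of coordinates of an arbitrary point $y\in X_{n-1}$. The value $b=1/\gamma$, where $\gamma$ is a product of roots of $P(t,1)$ coming from vanishing intermediate coordinates $y_k$. The shape $P(u,v)=aP(bv,w)$ persists, but $ab^d$ is unrestricted, and the special values genuinely occur already for $n=3$:
\begin{itemize}
\item For a root $\alpha$ of $t^d+t^e+1$ there is $c\neq 0$ with $P(2,1)=\alpha P(1,c)$. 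So $(2:1:c)\in X_2$, and the fibre of $X_3\to X_2$ over it forces $(x_0:x_1:x_2)\in\mathcal{C}_{2,1}$.
\item Likewise, a point $(1:1:c')\in X_2$ produces $\mathcal{C}_{1,1}$.
\end{itemize}
So you cannot ``verify that no new parameter values appear.'' Instead you must fix $(d,e)$ so that cases (i)--(iv) of Theorem B together cover every $a\neq 0$ and every $b$. That means $e\geq 1$, $\gcd(d,e)=1$ and $d>2e+3$. The bound $d>5$ is exactly the case $e=1$, where coprimality is automatic. Your parenthetical about $e\in\{0,1\}$ is misleading here. For $e=0$, case (iii) is unavailable, and indeed $\mathcal{C}_{2,1}$ degenerates to $z_0^d=4z_2^d$, a union of lines, so the argument does not go through.

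A smaller point: the ``arithmetic finiteness property'' you attribute to Noguchi is false as stated. The surface $P(x_0,x_1)=aP(x_2,x_3)$ contains the lines $x_0=\lambda x_2$, $x_1=\lambda x_3$ with $\lambda^d=a$. It is also not needed. The only arithmetic input is Faltings' theorem on each irreducible component of the relevant curves $\mathcal{C}_{a,b}$, since a hyperbolic curve has all components of geometric genus $\geq 2$. This is combined with:
\begin{itemize}
\item finiteness of $X_{n-1}(K)$ by induction;
\item the finitely many curves attached to each such $y$;
\item the finitely many points where all $P(x_i,x_{i+1})$ vanish simultaneously.
\end{itemize}
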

%--------------------------------------
\section*{Acknowledgement}
    This research is funded by the University of Education, Hue University under grant number NCTBSV.T.25 TN - 101 - 02. The author would like to express his sincere gratitude to Dinh Tuan Huynh for his valuable guidance and support throughout the research. The author also appreciates the insightful comments and discussions from all members of Hue Geometry - Arithmetic Group, especially Dr. Nguyen Khanh Linh Tran and Dr. Ngoc Long Le. 
%--------------------------------------
\section{Generalized Borel theorems}
    In this section, we will present the generalized Borel theorems in both compact case and logarithmic case in \cite{huynh2024some} in case of $\mathbb{CP}^2$. Before entering the details, we briefly introduce Nevanlinna theory in the complex projective space. The notations and theorems are mainly referenced from \cite{huynh2024some} (see also \cite{noguchi2013nevanlinna} and \cite{ru2001nevanlinna} for further details).

    For any $r>0$, we denote $\Delta_r=\{z\in\mathbb{C}:|z|< r\}$ as the open disk of radius $r$ centered at the origin. Let $E=\sum_{i\in\mathbb{N}}\alpha_ia_i$ for $\alpha_i\geq 0,a_i\in\mathbb{C}$ be an effective divisor on $\mathbb{C}$. Then, for a truncation level $m\in\mathbb{N}\cup\{\infty\}$, we define the \textit{$m$-truncated degree} of $E$ on $\Delta_r$ as
    \begin{equation*}
        n^{[m]}(r,E)=\sum_{a_i\in\Delta_r}\min\{m,\alpha_i\}.
    \end{equation*}
    The \textit{truncated counting function at level $m$} of $E$ is then defined as
    \begin{equation*}
        N^{[m]}(r,E)=\int_1^r\frac{n^{[m]}(t,E)-n^{[m]}(0,E)}{t}\,dt\eqno{(r>1)}.
    \end{equation*}

    Let $f:\mathbb{C}\rightarrow\mathbb{CP}^n$ be a holomorphic map with a reduced representation $f=[f_0:\cdots:f_n]$ in the homogeneous coordinate $[z_0:\cdots:z_n]$ of $\mathbb{CP}^n$. Let $Q$ be a homogeneous polynomial of degree $d\geq 1$ and $D=\{Q=0\}$ be a hypersurface in $\mathbb{CP}^n$. If $f(\mathbb{C})\not\subset D$, then $f^*D=\sum_{a\in\mathbb{C}}(\ord_a Q\circ f)a$ is an effective divisor on $\mathbb{C}$. Then, we define \textit{the truncated counting function of $f$} with respect to $D$ is defined as
    \begin{equation*}
        N_f^{[m]}(r,D)=N^{[m]}(r,f^*D),
    \end{equation*}
    which measures how often $f(\Delta_r)$ intersects $D$. In case $m=\infty$, we denote $N_f(r,D)$ instead of $N_f^{[\infty]}(r,D)$. Next, the \textit{proximity function of $f$} associated with $D$ is defined as
    \begin{equation*}
        m_f(r,D)=\frac{1}{2\pi}\int_{0}^{2\pi}\log\frac{\left\|f(re^{i\theta})\right\|^d_{\max}\left\|Q\right\|_{\max}}{|(Q\circ f)(re^{i\theta})|}d\theta,
    \end{equation*}
    where 
    \begin{equation*}
        \left\|f(z)\right\|_{\max}=\max\{|f_0(z)|,\dots,|f_n(0)|\},
    \end{equation*}
    and $\left\|Q\right\|_{\max}$ is the maximum absolute value of the coefficients of $Q$. The proximity function measures how close $f(\Delta_r)$ is to $D$. We observe that $|Q\circ f|\leq\left(\substack{d+n\\ n}\right)\|Q\|_{\max}\|f\|^d_{\max}$, thus $m_f(r,D)\geq O(1)$. Finally, the \textit{the order function of $f$} is defined as
    \begin{equation*}
        T_f(r)=\frac{1}{2\pi}\int_{0}^{2\pi}\log\left\|f(re^{i\theta})\right\|_{\max}d\theta=\int_{1}^{r}\frac{dt}{t}\int_{|z|<t}f^*\omega_{FS}+O(1)\eqno{(r>1)},
    \end{equation*}
    measuring the growth of the area of $f(\Delta_r)$ with respect to the Fubini-Study metric $\omega_{FS}$. The Nevanlinna is established by comparing the three functions. It consists of two main theorems. The first one is a reformulation of the Poisson-Jensen formula.
    \begin{namedthm*}
        {First Main Theorem}
        Let $f:\mathbb{C}\rightarrow\mathbb{CP}^n$ be a holomorphic map and $D$ be a hypersurface of degree $d$ in $\mathbb{CP}^n$. If $f(\mathbb{C})\not\subset D$, then we have
        \begin{equation*}
            m_f(r,D)+N_f(r,D)=dT_f(r)+O(1)
        \end{equation*}
        for every $r>1$. Together with $m_f(r,D)\geq O(1)$, we have 
        \begin{equation*}
            N_f(r,D)\leq dT_f(r)+O(1).
        \end{equation*}
    \end{namedthm*}
    
    Hence, the First Main Theorem provides a lower bound of the order function in term of the counting function. The reverse dirction, which is called \textit{Second Main Theorem}, is usually more challenging and requires the sum of certain counting functions of many divisors. 

    A family $\{D_i\}_{0\leq i\leq q}$  of $q+1\geq n+1$ hypersurfaces in $\mathbb{CP}^n$ is said to be \textit{in general position} if $\cap_{i\in I}D_i=\varnothing$ for any subset $I\subset\{0,\dots,q\}$ of cardinality $n+1$. A holomorphic map $f$ is called \textit{linearly degenerate} if its image is contained in a hyperplane. For a linearly nondegenerate holomorphic curve $f:\mathbb{C}\rightarrow\mathbb{CP}^n$ and for a family of $q+1\geq n+2$ hyperplanes $\{H_i\}_{i\,=\,0,\dots,\, q}$ in general position, Cartan \cite{Cartan1933} established a Second Main Theorem
    \begin{equation*}
        (q-n)T_f(r)\leq\sum_{i=0}^qN^{[n]}_f(r,H_i)+S_f(r)\quad\|,
    \end{equation*}
    where the notation $\|$ means the inequality holds for every $r$ outside a subset of $\mathbb{R}^+$ of finite Lebesgue measure, and $S_f(r)$ is a real-value function satisfying
    \begin{equation*}
        \lim_{r\rightarrow\infty}\inf\frac{S_f(r)}{T_f(r)}=0.
    \end{equation*}
    In the case $n=1$, Cartan recovred the classical Second Main Theorem of Nevanlinna.

    By applying Cartan Second Main Theorem, one can obtain some generalizations of Borel theorem \cite{huynh2024some}. This is our main tools to study the degeneracy of curves. For completeness, we will present a detailed proof in the case of $\mathbb{CP}^2$.
    \begin{namedthm*}
        {Generalized Borel theorem in the logarithmic case}
        Let $d,\delta_0,\delta_1,\delta_2$ be non-negative integers such that $d>6+\sum_{i=0}^2\delta_i$. Let $Q_i$ $(i=0,1,2)$ be the homogeneous polynomial of degree $\delta_i$. Suppose that the family of curves $\{D_i\}_{i=0,1,2}$ where $D_i=\{z_i^{d-\delta_i}Q_i=0\}$ is in general position in $\mathbb{CP}^2$. Then, for the collection of $3$ holomorphic functions $f_0,f_1,f_2$ such that 
        \begin{equation*}
            f_0^{d-\delta_0}Q_0(f_0,f_1,f_2)+f_1^{d-\delta_1}Q_1(f_0,f_1,f_2)+f_2^{d-\delta_2}Q_2(f_0,f_1,f_2)
        \end{equation*}
        is nowhere vanishing, the indices set $\{0,1,2\}$ is partitioned into one of the following cases
        \begin{align*}
            \{0,1,2\}=I_0\cup I_1\text{ such that }|I_0|&=2,|I_1|=1;\text{ or }\\
            \{0,1,2\}=I_0\cup I_1\text{ such that }|I_0|&=1,|I_1|=2;\text{ or }\\
            \{0,1,2\}=I_0\cup I_1\text{ such that }|I_0|&=0,|I_1|=3;\text{ or }\\
            \{0,1,2\}=I_0\cup I_1\cup I_2\text{ such that }|I_0|&=0,|I_1|=1,|I_2|=2,
        \end{align*}  
        which satisfies the following conditions:
        \begin{enumerate}[label=(\roman*)]
            \item $f_i^{d-\delta_i}Q_i(f_0,f_1,f_2)\equiv 0$ if and only if $i\in I_0$;
            \item For any $\alpha>0$, $\frac{f_i^{d-\delta_i}Q_i(f_0,f_1,f_2)}{f_j^{d-\delta_j}Q_j(f_0,f_1,f_2)}$ is constant for all $i,j\in I_\alpha$;
            \item $\sum_{i\in I_\alpha}f_i^{d-\delta_i}Q_i(f_0,f_1,f_2)\equiv 0$ for any $\alpha\neq 1$.
        \end{enumerate}
    \end{namedthm*}
    \begin{proof}
        Firstly, we define function $f=[f_0:f_1:f_2]:\mathbb{C}\rightarrow\mathbb{CP}^2$ and set 
        \begin{align*}
            \pi\colon\mathbb{CP}^2&\rightarrow\mathbb{CP}^2,\qquad [z_0:z_1:z_2]\mapsto [z_0^{d-\delta_0}Q_0:z_1^{d-\delta_1}Q_1:z_2^{d-\delta_2}Q_2],\\
            g=[g_0:g_1:g_2]\colon \mathbb{C}&\rightarrow\mathbb{CP}^2,\qquad z\mapsto
            \pi\circ f(z),
        \end{align*}
        where $g_i:=(z_i^{d-\delta_i}Q_i)\circ f$. Let $\{H_i\}_{0\leq i\leq 3}$ be the family of $4$ projective lines in $\mathbb{CP}^{2}$ given by 
        \begin{align*}
            H_i&=\{z_i=0\}\qquad(0\leq i\leq 2),\\
            H_{3}&=\{\sum_{i=0}^{n}z_i=0\},
        \end{align*}
        which is in general position. Let $D$ be the curve defined by the equation
        \begin{equation*}
            z_0^{d-\delta_0}Q_0+z_1^{d-\delta_1}Q_1+z_2^{d-\delta_2}Q_2=0.
        \end{equation*}
        By our assumption, the image of $f$ avoids the curve $D$.
        Then, let $I_0=\{i\in\{0,1,2\}:g_i\equiv 0\}$. If $|I_0|=2$ then for $I_1=\{0,1,2\}\setminus I_0$, all conditions are satisfied. If $|I_0|=1$, say $I_0=\{0\}$, then the image of $g$ lies in the projective line $H_0\cong\mathbb{CP}^1$. Suppose that $g$ is non-constant, then applying the Second Main Theorem of Nevanlinna for $g$ and 3 distinct points $H_0\cap H_i$ ($i=1,2,3$), we have
        \begin{equation}
            \label{Cartan SMT for g}
            T_g(r)\leq\sum_{i=1}^3N^{[1]}_g(r,H_i)+S_g(r)\quad\|.
        \end{equation}

        Since $D_i$ ($i=0,1,2$) is general position, for any $w\in S_3=\{z\in\mathbb{C}^3:|z|=1\}$, there exists an index $i\in\{0,1,2\}$ such that $Q(f(w))\neq 0$. Since $\mathbb{S}_3$ is compact, there are $0<C_1\leq C_2$ such that
        \begin{equation*}
            C_1\leq \max_{i=0,1,2}\big|(z_i^{d-\delta_i}Q_i)(w)\big|^{\frac{1}{d}}\leq C_2\eqno(\forall\,w\in\,\mathbb{S}_{3}).
        \end{equation*}
        Since the polynomial $z_i^{d-\delta_i}Q_i$ is homogeneous, we have
        \begin{equation*}
            C_1\leq \max_{i=0,1,2}\frac{\big|(z_i^{d-\delta_i}Q_i)(w)\big|^{\frac{1}{d}}}{\|w\|}\leq C_2\eqno(\forall\,w\in\,\mathbb{C}^3).
        \end{equation*}
        It follows that 
        \begin{equation*}
            C_1\leq\max_{i=0,1,2}\frac{|g_i(z)|^\frac{1}{d}}{\|f(z)\|}\leq C_2\eqno(\forall z\in\mathbb{C}).
        \end{equation*}
        Thus, 
        \begin{equation*}
            T_g(r)=dT_f(r)+O(1).
        \end{equation*}
        Therefore, the estimate \eqref{Cartan SMT for g} becomes
        \begin{equation}
            \label{applying cartan to g and Hi, generalized borel}
            dT_f(r)\leq \sum_{i=1}^{2} N^{[1]}_f(r,D_i)+N^{[1]}_f(r,D)+S_f(r)\quad\|.
        \end{equation}
        For each $i\in\{0,1,2\}$, let $R_i$ be the curve defined by the polynomial $Q_i$. Then, we have
        \begin{equation*}
            N^{[1]}_f(r,D_i)=N^{[1]}_f(r,R_i)+\min\{1,d-
            \delta_i\}N^{[1]}_f(r,H_i).
        \end{equation*} 
        Hence, the estimate \eqref{applying cartan to g and Hi, generalized borel} becomes
        \begin{equation*}
            dT_f(r)\leq \sum_{i=1}^{2} [N^{[1]}_f(r,R_i)+N^{[1]}_f(r,H_i)]+N^{[1]}_f(r,D)+S_f(r)\quad\|.
        \end{equation*}
        Moreover, by the First Main Theorem, we have $N_f^{[1]}(r,R_i)\leq \delta_iT_f(r)+O(1)$ and $N_f^{[1]}(r,H_i)\leq T_f(r)+O(1)$.
        Therefore, we obtain
        \begin{equation*}
            dT_f(r)\leq \sum_{i=1}^{2} (\delta_i+1)\, T_f(r)+N^{[1]}_f(r,D)+S_f(r)\quad\|,
        \end{equation*}
        or equivalently
        \begin{equation}
            \label{dT_f(r) in terms of Nf}
            (d-(2+\delta_1+\delta_2))T_f(r)\leq N^{[1]}_f(r,D)+S_f(r)\quad\|.
        \end{equation}

        Since $f$ avoids $D$ and $d>2+\delta_1+\delta_2$, this estimate leads to a contradiction. Hence, $g$ is constant; so that $g_1/g_2$ is constant. Let $I_1=\{0,1,2\}\setminus I_0$, then all conditions are satisfied.
        
        In the last case, we consider $|I_0|=0$. Suppose that $g$ is linearly non-degenerate, then applying Cartan Second Main Theorem for $g$ and $4$ lines $H_i$ ($0\leq i\leq 3$), we have
        \begin{equation*}
            T_g(r)\leq\sum_{i=0}^3 N^{[2]}_g(r,H_i)+S_g(r)\quad\|.
        \end{equation*}
        By the same arguments as in the proof of the estimate \eqref{dT_f(r) in terms of Nf}, we obtain
        \begin{equation*}
            (d-(6+\sum_{i=0}^{2}\delta_i))T_f(r)\leq N^{[2]}_f(r,D) + S_f(r)\quad\|,
        \end{equation*}
        which leads to a contradiction since $f(\mathbb{C})$ avoids $D$. Hence, there exists a relation
        \begin{equation*}
            a_0g_0+a_1g_1+a_2g_2=0,
        \end{equation*}
        where $(a_0,a_1,a_2)\in\mathbb{C}^3\setminus\{(0,0,0)\}$. We observe that there is at most one coefficient $a_i=0$, otherwise $|I_0|>0$. If there exists one index $i$ such that $a_i=0$, say $i=0$, then we have
        \begin{equation*}
            a_1g_1+a_2g_2=0.
        \end{equation*} 
        Thus, $g_1/g_2$ is constant. Let $I_1=\{0\}$ and $I_2=\{0,1,2\}\setminus I_1$, we easily check that all conditions are satisfied. If $a_0,a_1,a_2\neq 0$, then we have $g:\mathbb{C}\mapsto H\cong\mathbb{CP}^1$ where $H=\{a_0z_0+a_1z_1+a_2z_2=0\}$. If $g$ is non-constant, then applying the Second Main Theorem for $g$ and 3 distinct points $H\cap H_i$ ($i=0,1,2$), we have
        \begin{equation*}
            T_g(r)\leq\sum_{i=0}^2N^{[1]}_g(r,H\cap H_i)+S_g(r)\quad\|.
        \end{equation*}
        By the same arguments as in the proof of estimate \eqref{dT_f(r) in terms of Nf}, we obtain that
        \begin{equation*}
            \big[d-(3+\sum_{i=0}^{2}\delta_i)\big]T_f(r)\leq S_f(r)\quad\|,
        \end{equation*}
        which contradicts the condition $d>6+\sum_{i=0}^{2}\delta_i$. Hence, $g$ is constant. Let $I_1=\{0,1,2\}$, then all conditions are satisfied.
    \end{proof}
    Similarly, we obtain an analog result in the compact case:
    \begin{namedthm*}
        {Generalized Borel theorem in the compact case} 
        Let $d,\delta_0,\delta_1,\delta_2$ be non-negative integers such that $d>3+\sum_{i=0}^2\delta_i$. Let $Q_i$ $(i=0,1,2)$ be the homogeneous polynomial of degree $\delta_i$. Suppose that the family of curves $\{D_i\}_{i=0,1,2}$ where $D_i=\{z_i^{d-\delta_i}Q_i=0\}$ is in general position in $\mathbb{CP}^2$. Then, for the collection of $3$ holomorphic functions $f_0,f_1,f_2$ such that
        \begin{equation*}
            f_0^{d-\delta_0}Q_0(f_0,f_1,f_2)+f_1^{d-\delta_1}Q_1(f_0,f_1,f_2)+f_2^{d-\delta_2}Q_2(f_0,f_1,f_2)\equiv 0,
        \end{equation*} 
        then the indices set $\{0,1,2\}$ is partitioned into one of the following cases
        \begin{align*}
            \{0,1,2\}=I_0\cup I_1\text{ such that }|I_0|&=1,|I_1|=2;\text{ or }\\
            \{0,1,2\}=I_0\cup I_1\text{ such that }|I_0|&=0,|I_1|=3,
        \end{align*}
        which satisfies the following conditions:
        \begin{enumerate}[label=(\roman*)]
            \item $f_i^{d-\delta_i}Q_i(f_0,f_1,f_2)\equiv 0$ if and only if $i\in I_0$;
            \item $\frac{f_i^{d-\delta_i}Q_i(f_0,f_1,f_2)}{f_j^{d-\delta_j}Q_j(f_0,f_1,f_2)}$ is constant for all $i,j\in I_1$;
            \item $\sum_{i\in I_\alpha}f_i^{d-\delta_i}Q_i(f_0,f_1,f_2)\equiv 0$ for all $\alpha$.
        \end{enumerate}
    \end{namedthm*}
    \begin{proof}
        We define the function $f=[f_0:f_1:f_2]:\mathbb{C}\rightarrow\mathbb{CP}^2$. Setting the function $g$ and the family of $4$ projective lines $\{H_i\}_{0\leq i\leq 3}$ as in the proof of the proof of the logarithmic case. Then, by the assumption, the image of $g$ lies in the projective line $H_3\cong\mathbb{CP}^1$. If $g$ is non-constant, then we apply the Second Main Theorem of Nevanlinna for $g$ and 3 distinct points $H_3\cap H_i$ ($i=0,1,2$), we have
        \begin{equation*}
            T_g(r)\leq\sum_{i=0}^2N^{[1]}_g(r,H_3\cap H_i)+S_g(r)\quad\|.
        \end{equation*}
        By the same argument as in the proof of the estimate \eqref{dT_f(r) in terms of Nf}, we obtain that
        \begin{equation*}
            \big[d-(3+\sum_{i=0}^{2}\delta_i)\big]T_f(r)\leq S_f(r)\quad\|,
        \end{equation*}
        which contradicts the condition $d>3+\sum_{i=0}^{2}\delta_i$. Hence, $g$ is constant. 

        Then, let $I_0=\{i\in\{0,1,2\}:g_i\equiv 0\}$ and $I_1=\{0,1,2\}\setminus I_0$. It is obvious that $|I_0|\leq 1$ (otherwise $g_i\equiv 0$ for all $i=0,1,2$), and $g_i/g_j$ is constant for all $i,j\in I_1$ (since $g$ is constant). Therefore, all conditions are satisfied. 
    \end{proof}
%--------------------------------------
\section{Hyperbolicity of complement of Noguchi-El Goul's curves in \texorpdfstring{$\mathbb{CP}^2$}{CP2}}
    In this section, we prove the \textbf{Theorem A}. 
    Firstly, let $f=[f_0:f_1:f_2]\colon\mathbb{C}\rightarrow\mathbb{CP}^2\setminus\mathcal{C}$ be a holomorphic map. Hence, 
        \begin{equation*}
            f_0^d+f_1^d+f_2^{d-2}(\varepsilon_0f_0^2+\varepsilon_1f_1^2+f_2^2)
        \end{equation*}
        is nowhere vanishing. We need to prove that $f$ is constant. 
        
        Let $Q_0=Q_1=1$ and $Q_2=\varepsilon_0z_0^2+\varepsilon_1z_1^2+z_2^2$. Thus, $\delta_0=\delta_1=0$ and $\delta_2=2$. It is clear that curves $D_i=\{z_i^{d-\delta_i}Q_i=0\}$ ($i\in\{0,1,2\}$) are in general position.
        Hence, with the condition
        \begin{equation*}
            d>6+2=8,
        \end{equation*}
        applying the generalized Borel theorem in the logarithmic case leads to four cases:
        \begin{enumerate}[label=(\roman*)]
            \item $|I_0|=2$ and $|I_1|=1$;
            \item $|I_0|=1$ and $|I_1|=2$;
            \item $|I_0|=0$ and $|I_1|=3$;
            \item $|I_0|=0, |I_1|=1$ and $|I_2|=2$.
        \end{enumerate}
        Let us consider the case (i). We have three subcases:
        \begin{enumerate}[label=(i.\arabic*)]
            \item $f_0^d=f_1^d\equiv 0$ and $f(\mathbb{C})$ avoids $z_2^{d-2}(\varepsilon_0z_0^2+\varepsilon_1z_1^2+z_2^2)=0$. Hence, $f_0=f_1\equiv 0$, which implies $f=[0:0:1]$ is constant.
            \item \label{subcase: El Goul 1.2}$f_0^d=f_2^{d-2}(\varepsilon_0f_0^2+\varepsilon_1f_1^2+f_2^2)\equiv 0$ and $f(\mathbb{C})$ avoids $z_1=0$. Thus, $f_0=0$ and $$f_2=0\text{ or }f_2=\lambda f_1,$$  
            where $\lambda^2+\varepsilon_1=0$. Hence, $f=[0:1:0]$ or $f=[0:1:\lambda]$, which means $f$ is constant.
            \item $f_1^d=f_2^{d-2}(\varepsilon_0f_0^2+\varepsilon_1f_1^2+f_2^2)\equiv 0$ and $f(\mathbb{C})$ avoids $z_0=0$. By symmetry, it can be treated similarly to the subcase \ref{subcase: El Goul 1.2}.
        \end{enumerate}
        In the case (ii), we have three subcases:
        \begin{enumerate}[label=(ii.\arabic*)]
            \item \label{subcase: El Goul 2.1} $f_0^d\equiv 0$ and $f_2^{d-2}(\varepsilon_0f_0^2+\varepsilon_1f_1^2+f_2^2)=\lambda f_1^d$ for some $\lambda\neq -1,0$. Thus, $f_0\equiv 0$ and $f_2^d+\varepsilon_1f_2^{d-2}f_1^2-\lambda f_1^d=0$,
            which implies $f_2=\mu f_1\text{ where }\mu^d+\varepsilon_1\mu^{d-2}-\lambda=0$.
            Hence, $f=[0:1:\mu]$ is constant.
            \item $f_1^d\equiv 0$ and $f_2^{d-2}(\varepsilon_0f_0^2+\varepsilon_1f_1^2+f_2^2)=\lambda f_0^d$ for some $\lambda\neq -1,0$. By symmetry, it can be treated similarly to the subcase \ref{subcase: El Goul 2.1}.
            \item $f_2^{d-2}(\varepsilon_0f_0^2+\varepsilon_1f_1^2+f_2^2)\equiv 0$ and $f_0^d=\lambda f_1^d$ for some $\lambda\neq -1,0$. The latter equality implies $f_0=\mu f_1$ where $\mu^d=\lambda$. Then, the former equation leads to $f_2\equiv 0\text{ or }f_2^2+(\varepsilon_0\mu^2+\varepsilon_1)f_1^2=0$.
            If $f_2\equiv 0$, then $f=[\mu:1:0]$ is constant. If $f_2^2+(\varepsilon_0\mu^2+\varepsilon_1)f_1^2=0$, then $f_2=\tau f_1$ where $\tau^2+(\varepsilon_0 \mu^2+\varepsilon_1)=0$. Hence, $f=[\mu:1:\tau]$ is constant.    
        \end{enumerate}
        As for (iii), we have
        \begin{equation*}
            \left\{
            \begin{array}{l}
                f_1^d=c_1f_0^d\\
                f_2^{d-2}(\varepsilon_0f_0^2+\varepsilon_1f_1^2+f_2^2)=c_2f_0^d,
            \end{array}
            \right.
        \end{equation*}
        where $c_1,c_2\neq 0$ and $1+c_1+c_2\neq 0$. Thus, $f_1=\lambda f_0$ where $\lambda^d=c_1$; and
        \begin{equation*}
            f_2^d+(\varepsilon_0+\varepsilon_1\lambda^2)f_2^{d-2}f_0^2-c_2f_0^d=0, 
        \end{equation*}
        which implies $f_2=\mu f_0$ where $\mu^d+(\varepsilon_0+\varepsilon_1\lambda^2)\mu^{d-2}-c_2=0$. Therefore, $f=[1:\lambda:\mu]$ is constant.\\
        In the last case (iv), we have three subcases:
        \begin{enumerate}[label=(iv.\arabic*)]
            \item $f_0^d+f_1^d=0$ and $f(\mathbb{C})$ avoids the curve $z_2^{d-2}(\varepsilon_0z_0^2+\varepsilon_1z_1^2+z_2^2)=0$. Hence, $f(\mathbb{C})$ lies in the projective space $z_0=\lambda z_1$ where $\lambda^d+1=0$. But this line intersects with the curve $z_2^{d-2}(\varepsilon_0z_0^2+\varepsilon_1z_1^2+z_2^2)=0$ at three distinct points $(\lambda:1:0),(\lambda:1:\mu),(\lambda:1:\tau)$ where $\mu,\tau$ are two distinct non-zero solutions of $(\varepsilon_0\lambda^2+\varepsilon_1)+z^2=0$ (since $\varepsilon_0\lambda^2+\varepsilon_1\neq 0$ by our assumptions). Therefore, $f$ must be constant by Little Picard theorem.
            \item \label{subcase: El Goul 4.2} The image of $f$ lies in the curve $z_0^d+z_2^{d-2}(\varepsilon_0z_0^2+\varepsilon_1z_1^2+z_2^2)=0$ and avoids the line $z_1=0$. The curve $z_0^d+z_2^{d-2}(\varepsilon_0z_0^2+\varepsilon_1z_1^2+z_2^2)=0$ in the inhomogeneous coordinates $(X,Y)$ is given by the equation 
            \begin{equation*}
                X^d+\varepsilon_0X^2+\varepsilon_1Y^2+1=0.
            \end{equation*}
            To find the singularities of this curve, we consider the system
            \begin{equation*}
            \left\{
            \begin{array}{l}
                X^d+\varepsilon_0X^2+\varepsilon_1Y^2+1=0\\
                dX^{d-1}+2\varepsilon_0X=0\\
                2\varepsilon_1Y=0.
            \end{array}
            \right.
            \end{equation*}
            Hence, we have
            \begin{equation*}
                \left\{
                    \begin{array}{l}
                        \varepsilon_0\left(1-\frac{2}{d}\right)X^2+1=0\\
                        X=0\text{ or } X=\sqrt[d-2]{\frac{-2 \varepsilon_0}{d}}\\
                        Y=0.
                    \end{array}
                    \right.
            \end{equation*}
            This system has no solutions under our assumptions. Hence, it is a smooth curve of the genus $g=\frac{(d-1)(d-2)}{2}\geq 28>2$, which implies $f$ is constant.
            \item The image of $f$ lies in the curve $z_1^d+z_2^{d-2}(\varepsilon_0z_0^2+\varepsilon_1z_1^2+z_2^2)=0$ and avoids the line $z_0=0$. By symmetry, it can be treated similarly to the subcase \ref{subcase: El Goul 4.2}.
        \end{enumerate} 
    \hfill$\qed$
    
    \section{Hyperbolicity of Noguchi-Shirosaki's Fermat-type curves}
    
    This section is devoted to prove the \textbf{Theorem B}.
    We start with the case $b=0$.
    Let $f=[f_0:f_1:f_2]\colon\mathbb{C}\rightarrow\mathcal{C}_{a,0}$ be a holomorphic map. Hence,
    \begin{equation*}
        P(f_0,f_1)=aP(0,f_2),
    \end{equation*}
    or equivalently
    \begin{equation*}
        f_0^d+f_1^{d-e}(f_0^e+f_1^e)-af_2^d=0.
    \end{equation*}
    We need to prove $f$ is constant. Let $Q_0=1,Q_1=z_0^e+z_1^e$ and $Q_2=-a$. Thus, $\delta_0=\delta_2=0$ and $\delta_1=e$. It is clear that curves $D_i=\{z_i^{d-\delta_i}Q_i=0\}$ ($i\in\{0,1,2\}$) are in general position.
    Then, with the condition
    \begin{equation*}
        d>3+e,
    \end{equation*}
    applying the generalized Borel theorem in the compact case, we have two cases:
    \begin{enumerate}[label=(\roman*)]
        \item $|I_0|=1$ and $|I_1|=2$;
        \item $|I_0|=0$ and $|I_1|=3$.
    \end{enumerate}
    In the case (i), we have three subcases:
    \begin{enumerate}[label=(i.\arabic*)]
        \item $f_0^d\equiv 
        0$ and $f_1^{d-e}(f_0^e+f_1^e)-af_2^d=0$. Hence, $f_0=0$ and $f_1^d-af_2^d=0$, which implies that $f_1=\lambda f_2\text{ where } \lambda^d-a=0$.
        Thus, $f=[0:\lambda:1]$ is constant.
        \item $f_1^{d-e}(f_0^e+f_1^e)=0$ and $f_0^d-af_2^d=0$. The former equality implies that $f_1\equiv 0$ or $f_1=\lambda f_0$ where $\lambda^e+1=0$; and the latter equation leads to $f_2=\mu f_0$ where $1-a\mu^d=0$. Thus, $f=[1:0:\mu]$ or $f=[1:\lambda:\mu]$, which means $f$ is constant.  
        \item $-af_2^d\equiv 0$ and $f_0^d+f_1^{d-e}(f_0^e+f_1^e)=0$. Thus, $f_2\equiv 0$ and $f_0=\lambda f_1$ where $\lambda^d+\lambda^e+1=0$. Hence, $f=[\lambda:1:0]$ is constant.
    \end{enumerate}
    As for (ii), we have
    \begin{equation*}
        \left\{
        \begin{array}{l}
            f_1^{d-e}(f_0^e+f_1^e)=c_1f_0^d\\
            -af_2^d=c_2f_0^d,
        \end{array}
        \right.
    \end{equation*}
    where $c_1,c_2\neq 0$ and $1+c_1+c_2=0$. Hence, we have
    \begin{equation*}
        \left\{
        \begin{array}{l}
            f_1=\lambda f_0\\
            f_2=\mu f_0,
        \end{array}
        \right.
    \end{equation*}
    where $\lambda^d+\lambda^{d-e}-c_1=0$ and $a\mu^d+c_2=0$. Therefore, $f=[1:\lambda:\mu]$ is constant.
    \\[1ex]

    Now, we turn to the case $b\neq0$.
    It is sufficient to consider $b=1$. Indeed, for any $b\neq 0$, we have
    \begin{align*}
        aP(bz_1,z_2)&=a(b^dz_1^d+z_2^d+b^ez_1^ez_2^{d-e})\\
        &=ab^d(z_1^d+(z_2/b)^d+z_1^e(z_2/b)^d)\\
        &=ab^dP(z_1,z_2/b).
    \end{align*} 
    Let $f=[f_0:f_1:f_2]\colon\mathbb{C}\rightarrow\mathcal{C}_{a,1}$ be a holomorphic map. Hence, 
    \begin{align*}
        P(f_0,f_1)=aP(f_1,f_2),
    \end{align*}
    or equivalently
    \begin{equation}
        \label{Noguchi-Shirosaki's curves when a=1,e>0}
        f_0^d+f_1^{d-e}(f_0^e+(1-a)f_1^e)-af_2^{d-e}(f_1^e+f_2^e)=0.
    \end{equation}
    
    We need to prove $f$ is constant. Let $Q_0=1,Q_1=z_0^e+(1-a)z_1^e$ and $Q_2=-a(z_1^e+z_2^e)$. Thus, $\delta_0=1$ and $\delta_1=\delta_2=e$. 
    Under one of the following conditions:
    \begin{itemize}
        \item $a\not\in\{1,2\},e\geq 0$; or
        \item $a=2,e>0$; or
        \item $a=1,e=0$,
    \end{itemize}
    the system
    \begin{equation*}
        z_0^d=z_1^{d-e}(z_0^e+(1-a)z_1^e)=-az_2^{d-e}(z_1^e+z_2^e)=0.
    \end{equation*}
    has only trivial solution. Thus,  $D_i=\{z_i^{d-\delta_i}Q_i=0\}$ ($i\in\{0,1,2\}$) are in general position. Then, with the condition
    \begin{equation*}
        d>3+2e,
    \end{equation*}
    applying the generalized Borel theorem in the compact case, we have two cases:
    \begin{enumerate}[label=(\roman*)]
        \item $|I_0|=1$ and $|I_1|=2$;
        \item $|I_0|=0$ and $|I_1|=3$.
    \end{enumerate}
    Firstly, we consider (i). We have three subcases:
    \begin{enumerate}[label=(i.\arabic*)]
        \item $f_0^d\equiv 0$ and $f_1^{d-e}(f_0^e+(1-a)f_1^e)-af_2^{d-e}(f_1^e+f_2^e)=0$. Hence, $f_0\equiv 0$ and $(1-a)f_1^d-af_1^ef_2^{d-e}-af_2^d=0$, which implies $f_2=\lambda f_1\text{ where } (1-a)-a\lambda^{d-e}-a\lambda^d=0$. Thus, $f=[0:1:\lambda]$ is constant.    
        \item $f_1^{d-e}(f_0^e+(1-a)f_1^e)=0$ and $f_0^d-af_2^{d-e}(f_1^e+f_2^e)=0$. The former equation implies $f_1\equiv 0$ or $f_0=\lambda f_1$ where $\lambda^e+(1-a)=0$. If $f_1=0$ then we have $f_0^d-af_2^d=0$, which implies $f_0=\mu f_2 \text{ where } \mu^d-a=0$. Thus, $f=[\mu:0:1]$ is constant. If $f_0=\lambda f_1$, then $\lambda^df_1^d-af_1^ef_2^{d-e}-af_2^d=0$, which implies $f_2=\mu f_1\text{ where }\lambda^d-a\mu^{d-e}-a\mu^d=0$. Thus, $f=[\lambda:1:\mu]$ is constant.
        \item $-af_2^{d-2}(f_1^e+f_2^e)=0$ and $f_0^d+f_1^{d-e}(f_0^e+(1-a)f_1^e)=0$. The latter equality implies $f_0=\lambda f_1$ where $\lambda^d+\lambda^e+(1-a)=0$; and the former equation implies $f_2=0$ or $f_2=\mu f_1$ where $\mu^e+1=0$. Hence, $f=[\lambda:1:\mu]$ or $f=[\lambda:1:0]$, which means $f$ is constant. 
    \end{enumerate}
    In the case (ii), we have
    \begin{equation*}
        \left\{
        \begin{array}{l}
            f_1^{d-e}(f_0^e+(1-a)f_1^e)=c_1f_0^d\\
            -af_2^{d-e}(f_1^e+f_2^e)=c_2f_0^d,
        \end{array}
        \right.
    \end{equation*}
    where $c_1,c_2\neq 0$ and $1+c_1+c_2=0$. Thus, $f_1=\lambda f_0$ where $(1-a)\lambda^d+\lambda^{d-e}-c_1=0$; and $af_2^d+a\lambda^ef_2^{d-e}f_0^e+c_2f_0^d=0$, which implies $f_2=\mu f_0$ where $a\mu^d+a\lambda^e\mu^{d-e}+c_2=0$. Therefore, $f=[1:\lambda:\mu]$ is constant. 

    Finally, we consider the case $1-a=0$ and $e>0$. Then, the equation \eqref{Noguchi-Shirosaki's curves when a=1,e>0} becomes
    \begin{equation*}
        f_0^d+f_0^ef_1^{d-e}-f_1^ef_2^{d-e}-f_2^d=0.
    \end{equation*} 
    Hence, the image of $f$ lies in the curve $z_0^d+z_0^ez_1^{d-e}-z_1^ez_2^{d-e}-z_2^d=0$. This curve in the inhomogeneous coordinates $(X,Y)$ is given by the equation
    \begin{equation*}
        X^d+X^e-Y^{d-e}-Y^d=0.
    \end{equation*}
    To find singular points of this curve, we consider the system
    \begin{equation*}
        \left\{
        \begin{array}{lr}
            X^d+X^e-Y^{d-e}-Y^d&=0\\
            dX^{d-1}+eX^{e-1}&=0\\
            (d-e)Y^{d-e-1}+dY^{d-1}&=0.
        \end{array}
        \right.
    \end{equation*}
    
    It implies that
    \begin{equation}
        \label{sys: equiv system}
        \left\{
            \begin{array}{l}
                X=0\text{ or } X^{d-e}=\frac{-e}{d}\\
                Y=0\text{ or } Y^e=-\frac{d-e}{d},
            \end{array}
        \right.
    \end{equation}
    and
    \begin{equation*}
        d(X^d+X^e-Y^{d-e}-Y^d)-X(dX^{d-1}+eX^{e-1})+Y((d-e)Y^{d-e-1}+dY^{d-1})=0,
    \end{equation*}
    or equivalently
    \begin{equation}
        \label{eqn: equiv curve}
        (d-e)X^e-eY^{d-e}=0.
    \end{equation}
    Thus, if $X=0$ then \eqref{eqn: equiv curve} implies $Y=0$ (and vice versa). We consider the case $XY\neq 0$. Then, the equation \eqref{eqn: equiv curve} becomes
    \begin{equation*}
        (d-e)X^{e-d}X^d = eY^{-e}Y^d.
    \end{equation*}
    Together with the system \eqref{sys: equiv system} and $XY\neq 0$, we obtain
    \begin{equation*}
        (d-e)\frac{-d}{e}X^d=e\frac{-d}{d-e}Y^d,
    \end{equation*}
    or equivalently
    \begin{equation*}
        \left(\frac{d-e}{e}\right)^2X^d=Y^d.
    \end{equation*}
    Taking the modulus both sides, we have
    \begin{equation*}
        \left(\frac{d}{e}-1\right)^2|X|^d=|Y|^d.
    \end{equation*}
    Moreover, the system \eqref{sys: equiv system} implies that $|X|=\left(\frac{e}{d}\right)^{\frac{1}{d-e}}$ and $|Y|=\left(\frac{d-e}{d}\right)^{\frac{1}{e}}$ (since $XY\neq 0$). Hence,
    \begin{equation*}
        \left(\frac{d}{e}-1\right)^2\left(\frac{e}{d}\right)^{\frac{d}{d-e}}=\left(\frac{d-e}{d}\right)^{\frac{d}{e}},
    \end{equation*}
    or equivalently
    \begin{equation}
        \label{equivalent equation of non-zero singular point}
        \left(\frac{d}{e}-1\right)^2\left(\frac{d}{e}\right)^{-1-\frac{e}{d-e}}=\left(1+\frac{e}{d-e}\right)^{-\frac{d}{e}}.
    \end{equation}
    Since $d>2e+3$, we have $\frac{d}{e}>\frac{2e+3}{3}>2$ and $-1-\frac{e}{d-e}>-1-\frac{e}{e+3}>-2$. Thus,
    \begin{equation*}
        \left(\frac{d}{e}-1\right)^2\left(\frac{d}{e}\right)^{-1-\frac{e}{d-e}}>\left(\frac{e+3}{e}\right)^2\left(\frac{2e+3}{e}\right)^{-2}=\left(\frac{e+3}{2e+3}\right)^2.
    \end{equation*}
    Other hand, $1<1+\frac{e}{d-e}<1+\frac{e}{e+3}=\frac{2e+3}{e+3}$ and $-\frac{d}{e}<-\frac{2e+3}{e}<-2$. Hence
    \begin{equation*}
        \left(1+\frac{e}{d-e}\right)^{-\frac{d}{e}}<\left(\frac{2e+3}{e+3}\right)^{-2}.
    \end{equation*}     
    It implies that 
    \begin{equation*}
        \left(1+\frac{e}{d-e}\right)^{-\frac{d}{e}}<\left(\frac{e+3}{2e+3}\right)^2<\left(\frac{d}{e}-1\right)^2\left(\frac{d}{e}\right)^{-1-\frac{e}{d-e}},
    \end{equation*}
    which contradicts \eqref{equivalent equation of non-zero singular point}.
    Therefore, $(0,0)$ is the unique singular point of the curve $X^d+X^e-Y^{d-e}-Y^d=0$. By a Taylor expansion at $(0,0)$, the curve around $(0,0)$ is given locally by $X^e-Y^{d-e}=0$. Toghether with the condition $(d,e)=1$, this curve is irreducible. It follows that 
    (cf. \cite{goul1996algebraic}) 
    the genus of this curve is
    \begin{equation*}
        g=\frac{(d-1)(d-2)}{2}-\delta_0,
    \end{equation*}
    where 
    \begin{equation*}
        \delta_0=\frac{(e-1)(d-e-1)}{2}+\frac{\gcd(e,d-e)-1}{2}.
    \end{equation*}
    Hence
    \begin{align*}
        g&=\frac{d^2-(e+2)d+e^2+1}{2}.
    \end{align*}
    Since $d>2e+3>\frac{e+2}{2}$, we obtain
    \begin{align*}
        g&>\frac{(2e+3)^2-(e+2)(2e+3)+e^2+1}{2}\\
        &=\frac{3e^2+5e+4}{2}\\
        &>2.
    \end{align*}
    Therefore, $f$ is constant.

    \hfill$\qed$

%--------------------------------------
    \bibliographystyle{plain}
    \bibliography{references}

@article{noguchi2003arithmetic,
  title={An arithmetic property of Shirosakis hyperbolic projective hypersurface},
  author={Noguchi, Junjiro},
  journal={de Gruyter},
  year={2003},
  publisher={Walter de Gruyter GmbH \& Co. KG Berlin, Germany}
}

@article{demailly2012hyperbolic,
  title={Hyperbolic algebraic varieties and holomorphic differential equations},
  author={Demailly, Jean-Pierre},
  journal={Acta Math. Vietnam},
  volume={37},
  number={4},
  pages={441--512},
  year={2012}
}

@article{huynh2024some,
  title={Some variants of the generalized Borel Theorem and applications},
  author={Huynh, Dinh Tuan},
  journal={arXiv preprint arXiv:2407.16163},
  year={2024}
}

@book {Kobayashi1970,
    AUTHOR = {Kobayashi, Shoshichi},
     TITLE = {Hyperbolic manifolds and holomorphic mappings},
    SERIES = {Pure and Applied Mathematics},
    VOLUME = {2},
 PUBLISHER = {Marcel Dekker, Inc., New York},
      YEAR = {1970},
     PAGES = {ix+148},
   MRCLASS = {32.60},
  MRNUMBER = {0277770 (43 \#3503)},
MRREVIEWER = {W. Kaup},
}

@article {Zaidenberg1987,
    AUTHOR = {Zaidenberg, Mikhail},
     TITLE = {The complement to a general hypersurface of degree {$2n$} in
              {${\bf CP}^n$} is not hyperbolic},
   JOURNAL = {Sibirsk. Mat. Zh.},
  FJOURNAL = {Akademiya Nauk SSSR. Sibirskoe Otdelenie. Sibirski\u\i\
              Matematicheski\u\i\ Zhurnal},
    VOLUME = {28},
      YEAR = {1987},
    NUMBER = {3},
     PAGES = {91--100, 222},
      ISSN = {0037-4474},
   MRCLASS = {32H15 (32H20)},
  MRNUMBER = {904640 (88k:32063)},
MRREVIEWER = {Valentin Zdravkov Hristov},
}

@book{lang1991number,
  author    = {Serge Lang},
  title     = {Number Theory III: Diophantine Geometry},
  series    = {Encyclopaedia of Mathematical Sciences},
  volume    = {60},
  publisher = {Springer-Verlag},
  year      = {1991}
}

@book{vojta1987diophantine,
  author    = {Paul Vojta},
  title     = {Diophantine Approximations and Value Distribution Theory},
  series    = {Lecture Notes in Mathematics},
  volume    = {1239},
  publisher = {Springer-Verlag},
  year      = {1987}
}

@article{duval2003sextique,
  title={Une sextique hyperbolique dans P\^{} 3 (C)},
  author={Duval, Julien},
  journal={arXiv preprint math/0312304},
  year={2003}
}

@article{huynh2015examples,
  title={Examples of hyperbolic hypersurfaces of low degree in projective spaces},
  author={Huynh, Dinh Tuan},
  journal={arXiv preprint arXiv:1507.03542},
  year={2015}
}

@article {Huynh2016,
    AUTHOR = {Huynh, Dinh Tuan},
     TITLE = {Construction of hyperbolic hypersurfaces of low degree in
              {$\Bbb{P}^n(\Bbb{C})$}},
   JOURNAL = {Internat. J. Math.},
  FJOURNAL = {International Journal of Mathematics},
    VOLUME = {27},
      YEAR = {2016},
    NUMBER = {8},
     PAGES = {1650059},
      ISSN = {0129-167X},
   MRCLASS = {32Q45 (14J70 14N25 32J25)},
  MRNUMBER = {3530280},
MRREVIEWER = {Simone Diverio},
       URL = {https://doi.org/10.1142/S0129167X16500592},
}

@article {Zaidenberg1988,
    AUTHOR = {Zaidenberg, Mikhail},
     TITLE = {Stability of hyperbolic embeddedness and the construction of
              examples},
   JOURNAL = {Mat. Sb. (N.S.)},
  FJOURNAL = {Matematicheski\u\i\ Sbornik. Novaya Seriya},
    VOLUME = {135(177)},
      YEAR = {1988},
    NUMBER = {3},
     PAGES = {361--372, 415},
      ISSN = {0368-8666},
   MRCLASS = {32H20 (14H99 14J99 32H15)},
  MRNUMBER = {937646},
MRREVIEWER = {J. T. Davidov},
}

@article{faltings1983,
  author    = {Gerd Faltings},
  title     = {Endlichkeitssätze für abelsche Varietäten über Zahlkörpern},
  journal   = {Inventiones mathematicae},
  volume    = {73},
  number    = {3},
  pages     = {349--366},
  year      = {1983},
  publisher = {Springer}
}

@article {Mcquillan1999,
    AUTHOR = {McQuillan, M.},
     TITLE = {Holomorphic curves on hyperplane sections of {$3$}-folds},
   JOURNAL = {Geom. Funct. Anal.},
  FJOURNAL = {Geometric and Functional Analysis},
    VOLUME = {9},
      YEAR = {1999},
    NUMBER = {2},
     PAGES = {370--392},
      ISSN = {1016-443X},
   MRCLASS = {32Q45 (14J30 14J70 32H30)},
  MRNUMBER = {1692470},
MRREVIEWER = {Atanas Iliev},
       DOI = {10.1007/s000390050091},
       URL = {https://doi.org/10.1007/s000390050091},
}

@article {Demailly_Goul2000,
    AUTHOR = {Demailly, Jean-Pierre and El Goul, Jawher},
     TITLE = {Hyperbolicity of generic surfaces of high degree in projective
              3-space},
   JOURNAL = {Amer. J. Math.},
  FJOURNAL = {American Journal of Mathematics},
    VOLUME = {122},
      YEAR = {2000},
    NUMBER = {3},
     PAGES = {515--546},
      ISSN = {0002-9327},
     CODEN = {AJMAAN},
   MRCLASS = {32Q45 (14J29 14J70)},
  MRNUMBER = {1759887 (2001f:32045)},
MRREVIEWER = {Tomasz Szemberg},
       URL = {http://muse.jhu.edu/journals/american_journal_of_mathematics/v122/122.3demailly.pdf},
}

@article {mihaipaun2008,
    AUTHOR = {P{\u{a}}un, Mihai},
     TITLE = {Vector fields on the total space of hypersurfaces in the
              projective space and hyperbolicity},
   JOURNAL = {Math. Ann.},
  FJOURNAL = {Mathematische Annalen},
    VOLUME = {340},
      YEAR = {2008},
    NUMBER = {4},
     PAGES = {875--892},
      ISSN = {0025-5831},
     CODEN = {MAANA},
   MRCLASS = {14J70 (32Q45)},
  MRNUMBER = {2372741 (2009b:14085)},
MRREVIEWER = {Erwan Rousseau},
        URL = {http://dx.doi.org/10.1007/s00208-007-0172-5},
}

@article {Rousseau2007,
    AUTHOR = {Rousseau, Erwan},
     TITLE = {Weak analytic hyperbolicity of generic hypersurfaces of high
              degree in {$\Bbb P^4$}},
   JOURNAL = {Ann. Fac. Sci. Toulouse Math. (6)},
  FJOURNAL = {Annales de la Facult\'e des Sciences de Toulouse.
              Math\'ematiques. S\'erie 6},
    VOLUME = {16},
      YEAR = {2007},
    NUMBER = {2},
     PAGES = {369--383},
      ISSN = {0240-2963},
   MRCLASS = {32Q45 (14J70 32H30)},
  MRNUMBER = {2331545 (2008e:32035)},
MRREVIEWER = {Yoshihiro Aihara},
       URL = {http://afst.cedram.org.sci-hub.org/item?id=AFST_2007_6_16_2_369_0},
}

@article {Diverio-Trapani2010,
    AUTHOR = {Diverio, Simone and Trapani, Stefano},
     TITLE = {A remark on the codimension of the {G}reen-{G}riffiths locus
              of generic projective hypersurfaces of high degree},
   JOURNAL = {J. Reine Angew. Math.},
  FJOURNAL = {Journal f\"ur die Reine und Angewandte Mathematik. [Crelle's
              Journal]},
    VOLUME = {649},
      YEAR = {2010},
     PAGES = {55--61},
      ISSN = {0075-4102},
     CODEN = {JRMAA8},
   MRCLASS = {14J70 (14C20 32Q45)},
  MRNUMBER = {2746466 (2012b:14086)},
MRREVIEWER = {Dmitry Kerner},
       URL = {http://dx.doi.org.sci-hub.org/10.1515/CRELLE.2010.088},
}

@incollection {Siu2004,
    AUTHOR = {Siu, Yum-Tong},
     TITLE = {Hyperbolicity in complex geometry},
 BOOKTITLE = {The legacy of {N}iels {H}enrik {A}bel},
     PAGES = {543--566},
 PUBLISHER = {Springer, Berlin},
      YEAR = {2004},
   MRCLASS = {32Q45 (14J29 14K05 32H30)},
  MRNUMBER = {2077584},
MRREVIEWER = {William A. Cherry},
}

@article{Brody-Green1977,
 author               = {Brody, Robert and Green, Mark},
 fjournal             = {Duke Mathematical Journal},
 issn                 = {0012-7094},
 journal              = {Duke Math. J.},
 mrclass              = {32H20},
 mrnumber             = {0454080 (56 \#12331)},
 mrreviewer           = {Hirotaka Fujimoto},
 number               = {4},
 pages                = {873--874},
 title                = {A family of smooth hyperbolic hypersurfaces in {$P_{3}$}},
 volume               = {44},
 year                 = {1977},
 }

@article{shirosaki1998some,
  title={On some hypersurfaces and holomorphic mappings},
  author={Shirosaki, Manabu},
  journal={Kodai Mathematical Journal},
  volume={21},
  number={1},
  pages={29--34},
  year={1998},
  publisher={Department of Mathematics, Tokyo Institute of Technology}
}

@article{brotbek2017hyperbolicity,
  title={On the hyperbolicity of general hypersurfaces},
  author={Brotbek, Damian},
  journal={Publications math{\'e}matiques de l'IH{\'E}S},
  volume={126},
  pages={1--34},
  year={2017}
}

@article{berczi2024non,
  title={Non-reductive geometric invariant theory and hyperbolicity},
  author={B{\'e}rczi, Gergely and Kirwan, Frances},
  journal={Inventiones mathematicae},
  volume={235},
  number={1},
  pages={81--127},
  year={2024},
  publisher={Springer}
}

@article{rousseau2009logarithmic,
  title={Logarithmic vector fields and hyperbolicity},
  author={Rousseau, Erwan},
  journal={Nagoya Mathematical Journal},
  volume={195},
  pages={21--40},
  year={2009},
  publisher={Cambridge University Press}
}

@article{siu1996hyperbolicity,
  title={Hyperbolicity of the complement of a generic smooth curve of high degree in the complex projective plane},
  author={Siu, Yum-Tong and Yeung, Sai-kee},
  journal={Inventiones mathematicae},
  volume={124},
  number={1},
  pages={573--618},
  year={1996},
  publisher={Springer}
}

@article{brotbek2019kobayashi,
  title={Kobayashi hyperbolicity of the complements of general hypersurfaces of high degree},
  author={Brotbek, Damian and Deng, Ya},
  journal={Geometric and Functional Analysis},
  volume={29},
  pages={690--750},
  year={2019},
  publisher={Springer}
}

@article{zauidenberg1989stability,
  title={Stability of hyperbolic imbeddedness and construction of examples},
  author={Za{\u\i}denberg, MG},
  journal={Mathematics of the USSR-Sbornik},
  volume={63},
  number={2},
  pages={351},
  year={1989},
  publisher={IOP Publishing}
}

@book{noguchi2013nevanlinna,
  title={Nevanlinna theory in several complex variables and Diophantine approximation},
  author={Noguchi, Junjiro and Winkelmann, J{\"o}rg},
  volume={350},
  year={2013},
  publisher={Springer Science \& Business Media}
}

@article {siu_yeung1997,
    AUTHOR = {Siu, Yum-Tong and Yeung, Sai-Kee},
     TITLE = {Defects for ample divisors of abelian varieties, {S}chwarz
              lemma, and hyperbolic hypersurfaces of low degrees},
   JOURNAL = {Amer. J. Math.},
  FJOURNAL = {American Journal of Mathematics},
    VOLUME = {119},
      YEAR = {1997},
    NUMBER = {5},
     PAGES = {1139--1172},
      ISSN = {0002-9327},
     CODEN = {AJMAAN},
   MRCLASS = {32H20 (32H30)},
  MRNUMBER = {1473072 (98h:32044)},
MRREVIEWER = {Min Ru},
       URL = {http://muse.jhu.edu/journals/american_journal_of_mathematics/v119/119.5siu.pdf},
}

@article {shiffman_zaidenberg2002_pn,
    AUTHOR = {Shiffman, Bernard and Zaidenberg, Mikhail},
     TITLE = {Hyperbolic hypersurfaces in {$\Bbb P\sp n$} of
              {F}ermat-{W}aring type},
   JOURNAL = {Proc. Amer. Math. Soc.},
  FJOURNAL = {Proceedings of the American Mathematical Society},
    VOLUME = {130},
      YEAR = {2002},
    NUMBER = {7},
     PAGES = {2031--2035},
      ISSN = {0002-9939},
     CODEN = {PAMYAR},
   MRCLASS = {32Q45 (14J70 32H25)},
  MRNUMBER = {1896038 (2003e:32044)},
MRREVIEWER = {Min Ru},
       URL = {http://dx.doi.org/10.1090/S0002-9939-01-06417-6},
}

@article{demailly1997connexions,
  title={Connexions m{\'e}romorphes projectives partielles et vari{\'e}t{\'e}s alg{\'e}briques hyperboliques},
  author={Demailly, Jean-Pierre and El Goul, Jawher},
  journal={Comptes Rendus de l'Acad{\'e}mie des Sciences-Series I-Mathematics},
  volume={324},
  number={12},
  pages={1385--1390},
  year={1997},
  publisher={Elsevier}
}

@article{goul1996algebraic,
  title={Algebraic families of smooth hyperbolic surfaces of low degree in PC3},
  author={Goul, Jawher El},
  journal={manuscripta mathematica},
  volume={90},
  pages={521--532},
  year={1996},
  publisher={Springer}
}

@book{ru2001nevanlinna,
  title={Nevanlinna theory and its relation to Diophantine approximation},
  author={Ru, Min},
  year={2001},
  publisher={World Scientific}
}

@article{Cartan1933,  
    title = {Sur les z\'{e}ros des combinaisons lin\'{e}aires de $p$ fonctions holomorphes donn\'{e}es},
    author = { Cartan, Henri},
    journal = {Mathematica},
    volume = {7},
    pages = {80--103},
    year = {1933},

}
\end{document}